\def\tsc#1{\csdef{#1}{\textsc{\lowercase{#1}}\xspace}}
\definecolor{newcolor}{rgb}{.8,.349,.1}
\newtheorem{theorem}{Theorem}
\newtheorem{lemma}[theorem]{Lemma}
\newdefinition{remark}{Remark}
\newdefinition{method}{Method}
\newdefinition{example}[theorem]{Example}
\numberwithin{equation}{section}
\NewDocumentCommand{\dgal}{sO{}m}{%
	\IfBooleanTF{#1}
	{\dgalext{#3}}
	{\dgalx[#2]{#3}}%
}
\NewDocumentCommand{\dgalext}{m}{%
	\sbox0{%
		\mathsurround=0pt 
		$\left\{\vphantom{#1}\right.\kern-\nulldelimiterspace$%
	}%
	\sbox2{\{}%
	\ifdim\ht0=\ht2
	\{\kern-.45\wd2 \{#1\}\kern-.45\wd2 \}%
	\else
	\fi
}
\NewDocumentCommand{\dgalx}{om}{%
	\sbox0{\mathsurround=0pt$#1\{$}%
	\sbox2{\{}%
	\ifdim\ht0=\ht2
	\{\kern-.45\wd2 \{#2\}\kern-.45\wd2 \}%
	\else
	\mathopen{#1\{\kern-.5\wd0 #1\{}
	#2
	\mathclose{#1\}\kern-.5\wd0 #1\}}
	\fi
}
\newcommand{\f}[1]{\mathbf{#1}}
\newcommand{\norm}[1]{\left\| #1 \right\|}
\newcommand{\abs}[1]{\lvert#1\rvert}
\newcommand{\lapla}{\Delta}
\newcommand{\manifold}{\mathcal{M}}
\begin{document}
\let\WriteBookmarks\relax
\def\floatpagepagefraction{1}
\def\textpagefraction{.001}
\let\printorcid\relax 

\shorttitle{Gradient enhanced ADMM for dynamic OT on surfaces}
\shortauthors{G. Dong, H. Guo, C. Jiang and Z. Shi}

\title[mode = title]{Gradient enhanced ADMM Algorithm for dynamic optimal transport on surfaces}  

%
%
\author[1]{Guozhi Dong}
\address[1]{School of Mathematics and Statistics, HNP-LAMA,  Central South University, Changsha 410083, China.}
\ead{guozhi.dong@csu.edu.cn}

\author[2]{Hailong Guo}
\address[2]{School of Mathematics and Statistics,  The University of Melbourne,  Parkville, VIC 3010, Australia.}
\ead{hailong.guo@unimelb.edu.au}

\author[3]{Chengrun Jiang}
\address[3]{Department of Mathematical Sciences, Tsinghua University, Beijing, 100084,  China.}
\ead{jcr22@mails.tsinghua.edu.cn}

\author[4,5]{Zuoqiang Shi}
\address[4]{Yau Mathematical Sciences Center, Tsinghua University, Beijing, 100084, China.}
\address[5]{Yanqi Lake Beijing Institute of Mathematical Sciences and Applications, Beijing, 101408, China.}
\ead{zqshi@tsinghua.edu.cn}

\begin{abstract}
A gradient enhanced ADMM algorithm for optimal transport on general surfaces is proposed in this paper. 
Based on Benamou and Brenier's dynamical formulation, we combine gradient recovery techniques on surfaces with the ADMM algorithm, not only improving the computational accuracy, but also providing a novel method to deal with dual variables in the algorithm. 
This method avoids the use of stagger grids, has better accuracy and is more robust comparing to other averaging techniques.
\end{abstract}



\begin{keywords}
Optimal transport\sep Dynamical formulation\sep ADMM\sep Gradient recovery\sep Parametric polynomial preserving recovery
\end{keywords}

	
	
	
	

\maketitle


\section{Introduction}
Optimal transport (OT) as a research topic has been historically formulated by Monge.  Initially, it was a problem on finding ways of distributing piles of sand from one place to another with ``least cost" in transportation.
Although it was an interesting problem, which has been continuously receiving attentions from many different aspects, perhaps Monge never imagined that it has been so heavily influencing the shape of modern mathematics.
Many efforts have been devoted to the research of optimal transport.
The Monge's problem had been reformulated by Kantorovich \cite{Kantorovich42} among others as logistic and economical problems, which make optimization tools, e.g. linear programming been firmly developed.
In the end of 80s in the last century, it was Brenier who proved the equivalence of the Monge's problem and the Kantorovich's reformulation for the continuous case under certain conditions \cite{Bre89}.
Since then, optimal transport has found deep connections to several fields in pure and applied mathematics which were far away before. Understanding of these types of problems have made optimal transport a quite diverse concept but a cluster topic in mathematics \cite{Villani09,FigGla23}.
In the late 90s, a so-called dynamic formulation of OT has been proposed by Benamou and Brenier \cite{BenBre00}, which shows an even richer perspective in connections to fluid dynamics and optimization with partial different equation constraints. Nowadays, OT has made important impact in many branches of pure, applied and computational mathematics, particularly, it provides intersections of mathematics with other subjects, like economy, engineering, computer science  \cite{seismic-engquist,WGAN}. On the application side, OT can be applied to image registration, computational geometry, machine learning and so on \cite{PeyCut19}. 

In this paper, we study the computational methods for OT when the probability measures are defined on general surfaces or manifolds denoted by $\mathcal{M}$. Theoretical investigation of OT on manifolds was done in \cite{FelMcC02,FatFig10}, while the computational methods is a quite recent topic \cite{LCCS2018,GraAiJ23,yu2023meanfieldmanifold}.
We pick up the perspective of the dynamic formulation for OT attributed to Benamou and Brenier. One of the feature of the dynamical approach is that it provides the trajectory of the evolution of density function from the initial state to the target state, which is considered to be the geodesic on the Wasserstein manifold of probability measures. 
Precisely, we study numerical algorithms for the following partial differential equation constrained optimization problems:
\begin{eqnarray}\label{equ:ot_dynamic_convex}
	W_2^2(\rho_0,\rho_1):= &\underset{\rho,\mathbf{m}}{\inf}\int_{0}^{1}\int_{\mathcal{M}}\frac{|\mathbf{m}(t,\mathbf{x})|^2}{2\rho(t,\mathbf{x})} d\mu dt, \nonumber \\
	\text{ subject to }& \rho(0,\cdot)=\rho_0, \quad \rho(1,\cdot)=\rho_1, \quad \rho \geq 0, \nonumber\\
	& \partial_t \rho+ \text{div}_g (\mathbf{m})  =0.\label{equ:pde1} 
\end{eqnarray}
Here $\rho:[0,1]\times \mathcal{M} \to \mathbb{R}^+$ is the density function of the distribution, and $\mathbf{m}=\rho \mathbf{v}$ where $\mathbf{v}:[0,1]\times \mathcal{M}\to \mathcal{TM}$ is the velocity vector field which describes the evolution of $\rho$.
When $\mathcal{M}$ is a planar subset in Euclidean space, it has been studied intensively in the literature.
Some more background of this problem can be found in Section \ref{ssec:BB_form}, as well as the seminal paper of Benamou and Brenier \cite{BenBre00}, in which case, some deep learning method is proposed recently \cite{WanZhaBaoDonShi23}.
In contrast, the numerical investigation for dynamical OT in surface settings is still limited \cite{LCCS2018, yu2023meanfieldmanifold}.

One popular algorithm for the above problem is the alternating directional multiplier method (ADMM) which has been quite intensively investigated in the literature, see for instance \cite{BoyPar10} and the references therein. In fact, ADMM algorithm is usually applied to the dual problem of \eqref{equ:ot_dynamic_convex} in order to fit the required setting of ADMM in terms of convergence.
Another difficulty often omitted in the literature is that ADMM algorithm for problems in infinite dimensional spaces is more delicate to show its convergence, for which we refer to \cite{AttBolRedSou08}.
Our goal in the current work is to develop robust and efficient numerical solver for ADMM algorithm in the case of $\mathbf{x}\in \mathcal{M}$ where $\mathcal{M}$ is a $d-$dimensional Riemannian manifold.
ADMM algorithm requires alternatively updating primal and dual variables, while in the setting of dynamical OT, dual variables involve the spatial and temporal derivatives of their primal variables. In the surface setting, derivatives require the knowledge of the metric of surface or the normal vectors, which due to the discretization of the surface, are more difficult to evaluate than in the planar setting.
Moreover, discretization of a surface also destroys the differential structures of the surface, in which case, how to compute derivatives of functions on such discrete surfaces is an issue.
On the other hand, spatial discretization, e.g. using finite element method or finite difference method,  may require stagger grids to have the consistency of the primal and dual variables as done in \cite{PGOu2014}, or
use some transform between piecewise constant function and piecewise linear function \cite{LCCS2018}.

The key step in the ADMM solver for dynamical OT is to solve a time-space Poisson equation with right hand side function involving divergence of some vector field. From the numerical perspective in solutions of PDEs, e.g., finite element solutions, when we take gradients or divergences, they often lose one order accuracy in comparing to the solutions themselves. Closer inspection of the ALG2 algorithm in \cite{BenBre00}, we find that it involves a consecutive process of taking gradient and then taking divergence. Theoretically, when linear finite element method or standard five-point finite difference method is applied, we would not expect any order of convergence. However, extensive numerical results in the literature, like \cite{BenBre00, PGOu2014, yu2024fastproximal}, to just name a few, show the convergence of the numerical results. 

This paper provides a new angle to understand the hidden mechanism of the convergence, and proposing a numerical scheme which is easier to implement and more robust in performance. The key ingredient to obtain convergent numerical results lies in the process of interpolating the piecewise constant into piecewise linear counterpart by averaging quantities of neighboring grids. In post-processing of numerical data, e.g., finite element solutions, such a simple averaging technique is a popular gradient recovery method, which was originally proposed by Zienkiewicz and Zhu for the purpose of adaptive computation \cite{ZZ1992}. The simple averaging technique was proved to achieve $\mathcal{O}(h^2)$ on some meshes like uniform meshes of regular type \cite{xu2004super}. In other cases, differentiation also achieves $\mathcal{O}(h)$ order convergence which explains why it is able to observe convergence in the works from the literature. However, the simple averaging gradient recovery is not able to achieve $\mathcal{O}(h^2)$ even on some uniform meshes, e.g. Chevron mesh \cite{DongGuo2020}. To overcome such drawback, better techniques are proposed. In particular, the polynomial preserving recovery is proven to be $\mathcal{O}(h^2)$ order consistent for all types of meshes. Such kind robust techniques has been generalized to the surface or manifold setting in \cite{DongGuoGuo2024}. In this paper, together with gradient recovery techniques on manifolds, we propose gradient enhanced ADMM method. The (surface) gradient recovery serves two purposes: one is to improve the accuracy of numerical gradient/divergence  and the other is to facilitate the implementation on a single mesh where generating dual mesh on surfaces is nontrivial. In the end, we propose a fast algorithm for dynamic OT on manifolds without staggered grid, which ensures the consistency of the primal and dual variables using the same grid simultaneously. In particular, we show the efficiency and robustness of our method with numerical examples.


The structure of the paper is outlined as follows:
Section \ref{sec:pre} collects some brief introduction on the dynamical formulation of OT, and gradient recovery techniques, particularly PPPR.
Section \ref{sec:algo} presents the proposed algorithm in detail, while their numerical results are documented in Section \ref{sec:num}.

\section{Preliminary}\label{sec:pre}
We shall briefly summarize the Benamou--Brenier formulation of OT, and the augmented Lagrange multiplier of its dual form, as well as the gradient recovery schemes we use in this work. First of all, let us describe the setting of the geometry and notations.

The manifold $\mathcal{M}$ we consider is a $d$-dimensional closed Riemannian manifold embedded in $\mathbb{R}^m$ for $d+1\leq m$, which endowed with the Riemann metric $g$.
Using local coordinates, the gradient of functions on the manifold is represented as follows
\begin{equation}
	\label{eq:local_gradient1}
	\nabla_g u= \sum_{i,j} g^{ij}\partial_j u \partial_i,
\end{equation}
where $g^{ij}$ is the entry of the inverse of the metric tensor $g$, and $\partial_i$ denotes the tangential basis.
In practice, the above formula can be realized using local parametrization $\f r:\Omega\rightarrow S \subset \manifold$.
Then we rewrite \eqref{eq:local_gradient1} via this map:
\begin{equation}
	\label{eq:local_gradient2}
	(\nabla_g u)\circ \f r=\nabla \bar{u}  (g\circ \f r)^{-1} \partial \f r.
\end{equation}
Here $\bar{u}=u\circ \mathbf{r}$ is the pullback of function $u$ to the local parametric domain $\Omega$, $\nabla$ denotes the gradient operator on $\Omega$, $\partial \f r$ is the Jacobian of $\f r$, and 
\[g\circ \f r =\partial \f r(\partial \f r)^\top.\]
In the light of \eqref{eq:local_gradient1}, the Laplace-Beltrami operator can be represented as
\begin{equation}
	\label{eq:laplace_beltrami}
	\lapla_g u=\text{div}_g(\nabla_g u)=\frac{1}{\sqrt{\abs{\det g}}}\partial_i(g^{ij}\sqrt{\abs{\det g}}\partial_j u).
\end{equation}

\subsection{Benamou--Brenier formulation of OT}
\label{ssec:BB_form}
We study the following problem to calculate the $L^2$ Wasserstein distance between two probability measure $\rho_0\in P(\mathcal{M})$ and $\rho_1\in P(\mathcal{M})$:
\begin{equation}\label{equ:ot_manifold}
	W_2^2(\rho_0,\rho_1):=\inf_{\Pi}\int_{\mathcal{M}}\frac{1}{2}|\Pi(\mathbf{x})-\mathbf{x}|^2\rho_0(\mathbf{x})d\mu ,
\end{equation}
where $\Pi:\mathcal{M} \to \mathcal{M}$ is a transport map which moves particle at location $\mathbf{x}$ to $y=\Pi(\mathbf{x})$. Note that both $\rho_0$ and $\rho_1$ are characterized by their density functions, still denoted by $\rho_0(\mathbf{x})$ and $\rho_1(\mathbf{x})$, respectively,
which tell the distribution of the particles before and after transportation, respectively.

How to compute the optimal transport map has been a challenging problem since Monge.
Due to the seminal work of Benamou and Brenier\cite{BenBre00}, the Wasserstein distance in \eqref{equ:ot_manifold} can be equivalently calculated via solving a dynamical flow constrained optimization problem
\begin{equation}\label{equ:ot_dynamic}
	\begin{aligned}
		W_2^2(\rho_0,\rho_1):= &\underset{\rho,\mathbf{v}}{\inf}\int_{0}^{1}\int_{\mathcal{M}}\frac{1}{2}|\mathbf{v}(t,\mathbf{x})|^2\rho(t,\mathbf{x})d\mu dt,\\
		\text{ subject to }& \rho(0,\cdot)=\rho_0, \quad \rho(1,\cdot)=\rho_1, \quad \rho \geq 0,\\
		& \partial_t \rho+ \text{div}_g(\rho \mathbf{v})=0.
	\end{aligned}
\end{equation}
Here $\mathbf{v}: [0,1]\times \mathcal{M} \to \mathcal{TM}$ is a temporal-spatial tangent vector field, and $\rho:[0,1]\times \mathcal{M} \to \mathbb{R}^+$ defines an interpolation curve between the two densities $\rho_0$ and $\rho_1$. In this paper, we use $ \mathcal{P}:=[0,1] \times \mathcal{M}$ to denote the temporal-spatial product manifold.
In the following algorithmic development, one sorts numerical solutions of partial differential equations on this manifold, and also some gradient recovery techniques on this non-Euclidean domain.

Let $\mathbf{m}=\rho \mathbf{v}$, by a substitution of variable, the problem in \eqref{equ:ot_dynamic} can be reformulated as a convex variational problem subject to linear partial differential equation constraint as in \eqref{equ:ot_dynamic_convex}.
Note here if we name $\boldsymbol{\sigma}=(\rho,\mathbf{m})$, then the PDE \eqref{equ:pde1} can be written equivalently as $\mbox{div}_{\mathbf{p}} \boldsymbol{\sigma} =0$, where $\mbox{div}_{\mathbf{p}}$ is the divergence operator for functions defined on the temporal-spatial product space $\mathcal{P}$. Later we shall use $\nabla_{\mathbf{p}}$ to denote the temporal-spatial gradient operator on $\mathcal{P}$,  and $\Delta_{\mathbf{p}}$ to denote the temporal-spatial Laplacian operator, i.e.
\begin{equation}
	\nabla_{\mathbf{p}} u=(\partial_t u,\nabla_{g}u)^T\quad\text{and}\quad\Delta_{\mathbf{p}}u=\partial_{tt}u+\Delta_{g}u.
	\nonumber
\end{equation}

For simplicity, we define
\[K(\rho,\mathbf{m}):= \int_{0}^{1}\int_{\mathcal{M}}\frac{|\mathbf{m}(t,\mathbf{x})|^2}{2\rho(t,\mathbf{x})} d\mu dt.\]
Here we first give the Lagrange multiplier to formulate a saddle point problem:
\begin{equation}\label{equ:LM}
	\underset{\rho,\mathbf{m}}{\inf}\;\underset{u,\lambda \leq 0}{sup} L(\rho,\mathbf{m},u,\lambda):=  K(\rho,\mathbf{m}) + \int_{0}^{1}\int_{\mathcal{M}} (\partial_t \rho u  +   \nabla_g \mathbf{m} u + \lambda \rho ) d\mu dt.
\end{equation}
Taking into account the initial conditions, formally we derive the first-order stationary condition of \eqref{equ:LM} as follows
\begin{equation}\left\{
	\begin{aligned}
		\frac{\mathbf{m}}{\rho}-\nabla_g u =0, & \quad \partial_t \rho + \text{div}_g \mathbf{m} =0,\\
		-\frac{\abs{\mathbf{m}}^2}{2\rho^2}-\partial_tu +\lambda=0, & \quad \rho(0,\cdot)=\rho_0, \quad \rho(1,\cdot)=\rho_1, \quad \lambda \leq 0.
	\end{aligned}\right.
\end{equation}

\subsection{Gradient recovery on manifolds}\label{sec:gr}
Gradient recovery for numerical solutions in planar domain is a well-developed topic, there are many mature methods proposed in the literature, see for instance \cite{Zhang2007, ZZ1992, Zhang2005new}.  While gradient recovery on surfaces is relatively new, where some recent papers on this topic can be found in e.g. \cite{WeiChenHuang09, DongGuo2020,DongGuoGuo2024}.
We will first review the idea of a gradient recovery method called polynomial preserving recovery(PPR), and then to discuss generalization of this method to the surface setting, which is the parametric polynomial preserving recovery(PPPR).
\subsubsection{Polynomial preserving recovery}\label{ssec:ppr}
To lay the ground, we introduce the PPR gradient recovery for time discretization. Let $I_{\tau} = \cup_{j=0}^{N_t-1}(t_j, t_{j+1})$ be a partition of the interval $I = (0, 1)$  with $\tau_j = t_{j+1}-t_j$ and $S_{\tau}$ be continuous finite element space on $I_{\tau}$, i.e.
\begin{equation}\label{equ:timefem}
	S_{\tau} = \left\{v_{\tau}\in C^0([0, 1]):  v_{\tau}|_{(t_j, t_{j+1})}
	\in \mathbb{P}_1(t_j, t_{j+1})\right\}.
\end{equation}
For any $v_{\tau}\in S_{\tau}$, $\nabla v_{\tau}$ is piecewise constant. The main idea of gradient recovery is to  smooth the piecewise gradient into a continuous piecewise linear function. Let $G_{\tau}: S_{\tau} \rightarrow S_{\tau}$ denote the PPR gradient operator \cite{Zhang2007} on $I_{\tau}$. Suppose $\{\phi_{j}(t)\}$ is the set of nodal basis functions of $S_{\tau}$ and we can write $G_{\tau} v_{\tau} = \sum_{j=0}^{N_{t}}(G_{\tau}v_{\tau})(t_j)\phi_j(t)$.  It is sufficient to define the value of $G_{\tau}(v_{\tau})(t_j)$. Define 
\begin{eqnarray}
	I_{t_j} = \left\{
	\begin{array}{ll}
		(t_0, t_2), &  j = 0; \\
		(t_{j-1}, t_{j+1}), & 1 \le j \le N_t -1; \\
		(t_{N_t-2}, t_{N_t}), & j = N_t. 
	\end{array}
	\right.
\end{eqnarray}
We fit a quadratic polynomial $p_{t_j}$ in the following least-squares sense
\begin{equation}
	p_{t_j}=\arg \min _{p \in \mathbb{P}^2\left(I_{t_j}\right)} \sum_{t_k \in I_{t_j} \cap \mathcal{N}_{\tau}}\left|p\left(t_k\right)-v_{\tau}\left(t_k\right)\right|^2,
\end{equation}
where $N_{\tau} = \{t_0, t_1, \cdots, t_{N_t}\}$.
Then the recovered gradient $(G_{\tau} v_{\tau})(t_j)$ is defined as
\begin{equation}
	(G_{\tau} v_{\tau})(t_j)  = \frac{d p_{t_j}(t)}{dt}|_{t = t_j}. 
\end{equation}
It is easy to see that $(G_{\tau} v_{\tau})(t_j)$ is a second-order finite difference scheme at $t_j$. 
As proved in \cite{Zhang2007}, the recovery operator $G_{\tau}$ satisfies the following consistency results
\begin{equation}\label{equ:timeconsistency}
	\|G_{\tau}v - \frac{dv}{dt}\|_{H^1(0, 1)} \lesssim \tau^2 \|v\|_{H^3(0, 1)}. 
\end{equation}
\subsubsection{Parametric polynomial preserving recovery}\label{ssec:PPPR}
From the parametric form \eqref{eq:local_gradient2} of surface gradient $\nabla_g$, it informs that one can compute the surface gradient of a function $u:\mathcal{M}\to \mathbb{R}$ via the multiplication of two functions: One being the gradient of the function $\bar{u}=u\circ \mathbf{r}$ and the other being the generalized inverse of the Jacobian of the parametric map, which is $(\partial \mathbf{r})^\dag = (\partial \mathbf{r} (\partial \mathbf{r})^\top )^{-1} \partial \mathbf{r} $. Both functions are defined on the same local Euclidean domain. Based on this observation, Dong and Guo \cite{DongGuo2020} proposed the parametric polynomial preserving recovery (PPPR) by replacing the gradient/Jacobian in the Euclidean space with their recovered counterparts using parametric polynomial preserving recovery. Let $\bar{G}_h$ denote the polynomial preserving recovery in the local Euclidean space as defined in \cite{Zhang2005new}. Then, the PPPR gradient recovery operator at vertex $\xi_i$ can be represented as
\begin{equation}
	G_hu_h(\xi_i) = \bar{G}_h \bar{u}_h(\xi_i)(\bar{G}_h \mathbf{r}_h(\xi_i))^{\dag},
\end{equation}
where $\bar{u}_h$ is the local pushback of $u_h$ onto the local parametric domain, and $\mathbf{r}_h$ is the local approximation of the parametric map. For the PPPR operator $G_h$, \cite{DongGuo2020} shows that it has second-order consistency in the following sense:
\begin{equation}
	\left\|\nabla_g u-\left(T_h\right)^{-1} G_h u_I\right\|_{0, \mathcal{M}} \leq h^2 \sqrt{\mathcal{A}(\mathcal{M})} D\left(g, g^{-1}\right)\|u\|_{3, \infty, \mathcal{M}},
\end{equation}
where $\mathcal{A}(\mathcal{M})$ is the area measure of the manifold $\mathcal{M}$ and $D(g, g^{-1})$ is some constant depends on the metric tensor $g$.


\section{Gradient enhanced ADMM algorithm}\label{sec:algo}
In this section,  we shall present a gradient enhanced algorithms based on ALG2 \cite{BenBre00} in the manifold setting.
\subsection{ADMM algorithm for dynamical OT}
In \cite{BenBre00}, an alternating direction method of multipliers (ADMM) algorithmic framework for numerical solutions of dynamical optimal transport were introduced.
However, a direct development of an ADMM algorithm building on the Lagrange formulation in \eqref{equ:LM} appears to be nontrivial due to the first term which mixes $\rho$ and $\mathbf{m}$. To overcome this obstacle, the dual function of the first term in \eqref{equ:LM} has been considered. 
This is done by using the following result
\begin{lemma}\label{lem:dual_form}
	For any $\rho\in \mathbb{R}^+$, and $\mathbf{m}\in \mathbb{R}^d$, we have 
	\[ \frac{|\mathbf{m}|^2}{2\rho} = \sup_{(a,\mathbf{b})\in A} \{a\rho + \mathbf{b}\cdot \mathbf{m} \}, \]
	where $A$ is defined as 
	\[ A:= \{ (a,\mathbf{b}) \in \mathbb{R}\times \mathbb{R}^d:  a+ \frac{|\mathbf{b}|^2}{2}\leq 0 \} .\]
\end{lemma}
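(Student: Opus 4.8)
The plan is to recognize the claimed identity as a standard Legendre--Fenchel duality computation: the function $\rho\mapsto |\mathbf{m}|^2/(2\rho)$ on $\mathbb{R}^+$ (for fixed $\mathbf{m}$), or more symmetrically the function $(\rho,\mathbf{m})\mapsto |\mathbf{m}|^2/(2\rho)$ on $\mathbb{R}^+\times\mathbb{R}^d$, is convex and positively $1$-homogeneous, so it equals the support function of a convex set, and one only needs to identify that set as $A$. Concretely, I would verify that
\[
\sup_{(a,\mathbf{b})\in A}\{a\rho+\mathbf{b}\cdot\mathbf{m}\}=\frac{|\mathbf{m}|^2}{2\rho}
\]
by proving the two inequalities separately.

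First, the ``$\leq$'' direction (feasibility of the bound): for any $(a,\mathbf{b})\in A$ we have $a\leq -|\mathbf{b}|^2/2$, hence
\[
a\rho+\mathbf{b}\cdot\mathbf{m}\leq -\frac{|\mathbf{b}|^2}{2}\rho+\mathbf{b}\cdot\mathbf{m}.
\]
Since $\rho>0$, the right-hand side is a concave quadratic in $\mathbf{b}$; completing the square gives $-\tfrac{\rho}{2}\bigl|\mathbf{b}-\mathbf{m}/\rho\bigr|^2+|\mathbf{m}|^2/(2\rho)\leq |\mathbf{m}|^2/(2\rho)$. Taking the supremum over $(a,\mathbf{b})\in A$ yields $\sup_{(a,\mathbf{b})\in A}\{a\rho+\mathbf{b}\cdot\mathbf{m}\}\leq |\mathbf{m}|^2/(2\rho)$.

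Second, the ``$\geq$'' direction (attainment): choose the explicit candidate $\mathbf{b}^\ast=\mathbf{m}/\rho$ and $a^\ast=-|\mathbf{b}^\ast|^2/2=-|\mathbf{m}|^2/(2\rho^2)$. Then $a^\ast+|\mathbf{b}^\ast|^2/2=0$, so $(a^\ast,\mathbf{b}^\ast)\in A$, and
\[
a^\ast\rho+\mathbf{b}^\ast\cdot\mathbf{m}=-\frac{|\mathbf{m}|^2}{2\rho}+\frac{|\mathbf{m}|^2}{\rho}=\frac{|\mathbf{m}|^2}{2\rho},
\]
which shows the supremum is at least $|\mathbf{m}|^2/(2\rho)$ and in fact is attained. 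Combining the two inequalities proves the lemma. I do not anticipate any real obstacle here; the only mild subtlety is that the supremum is over an unbounded set $A$, so one should note that it is genuinely attained (which the explicit maximizer $(a^\ast,\mathbf{b}^\ast)$ takes care of) rather than merely approached, and that the case $\mathbf{m}=\mathbf{0}$ is handled correctly by the same formulas (giving value $0$ with maximizer $\mathbf{b}^\ast=\mathbf{0}$, $a^\ast=0$). Optionally I would remark that this is exactly the statement that $A$ is (a representation of) the closed convex hull of the epigraph data for the kinetic energy density, which is why the substitution $\mathbf{m}=\rho\mathbf{v}$ turns the nonconvex functional in \eqref{equ:ot_dynamic} into the convex one in \eqref{equ:ot_dynamic_convex}.
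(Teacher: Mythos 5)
Your proof is correct and complete: the upper bound via $a\le -|\mathbf{b}|^2/2$ followed by completing the square in $\mathbf{b}$, and the lower bound via the explicit maximizer $(a^\ast,\mathbf{b}^\ast)=(-|\mathbf{m}|^2/(2\rho^2),\,\mathbf{m}/\rho)\in A$, together establish the identity, and your remarks on attainment and on the case $\mathbf{m}=\mathbf{0}$ are accurate. For comparison, the paper states Lemma~\ref{lem:dual_form} without any proof (it is the standard convex-duality fact underlying the Benamou--Brenier reformulation, imported from \cite{BenBre00}), so there is no paper argument to diverge from; your two-inequality Legendre--Fenchel computation is exactly the canonical way to justify it.
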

Let $\mathbf{q}=(a,\mathbf{b})$, and $\boldsymbol{\sigma}=(\rho,\mathbf{m})$, with the relation in Lemma \ref{lem:dual_form}, then the Lagrangian problem in \eqref{equ:LM} can be reformulated as
\begin{equation*}
	\begin{aligned}
		\inf_{\rho,\mathbf{m}} \sup_{u,\lambda<0} L(\rho,\mathbf{m},u,\lambda)=\sup_{u,\mathbf{q}\in A} \inf_{\boldsymbol{\sigma}} 
		&\int_{0}^{1}\int_{\mathcal{M}} (\boldsymbol{\sigma} \cdot \mathbf{q} -    [\partial_t u  ,   \nabla_g u]\cdot  \boldsymbol{\sigma} ) d\mu dt \\
		& - \int_{\mathcal{M}} (u(0,\cdot)\rho_0 -u(1,\cdot)\rho_1 )d\mathbf{x} .
	\end{aligned}
\end{equation*}
After some rearrangement, we have a new saddle-point problem that can be seen as a dual problem of the original Lagrange multiplier problem in \eqref{equ:LM}, which is
\begin{equation}\label{equ:re_LM}
	\sup_{u,\mathbf{q}\in A} \inf_{\boldsymbol{\sigma}} 
	\int_{0}^{1}\int_{\mathcal{M}} (\boldsymbol{\sigma}\cdot (\mathbf{q} -  \nabla_{\mathbf{p}}   \ u) )d\mu dt - \int_{\mathcal{M}} (u(0,\cdot)\rho_0 -u(1,\cdot)\rho_1 )d\mathbf{x} .
\end{equation}
Note that $\nabla_{\mathbf{p}}$ denotes the temporal-spatial gradient defined on the product manifold $\mathcal{P}$.
We introduce the notations for simplification of presentation
\begin{eqnarray}
	F(\mathbf{q}) & :=& \left\{\begin{aligned}
		&0, && \quad \mathbf{q}\in A,\\
		&+\infty, && \quad \text{else}.
	\end{aligned}\right.\\ 
	G(u)& := &\int_{\mathcal{M}} (u(0,\cdot)\rho_0 -u(1,\cdot)\rho_1 )d\mathbf{x} .  
\end{eqnarray}
Then the new saddle-point problem \eqref{equ:re_LM} is of the following new Lagrange formulation with a new multiplier $\boldsymbol{\sigma}$, while $\mathbf{q}$ and $u$ are primal variables:
\begin{equation}\label{equ:new_LM}
	\inf_{\rho,\mathbf{m}} \sup_{u,\lambda<0} L(\rho,\mathbf{m},u,\lambda)= -\inf_{u, \mathbf{q} } \sup_{\sigma}  F(\mathbf{q}) + G (u) + \int_{0}^{1}\int_{\mathcal{M}} (\boldsymbol{\sigma}\cdot (\nabla_{\mathbf{p}} u -\mathbf{q} ) )d\mu dt.
\end{equation}
Note that the new problem can be considered a Lagrange multiplier of equality constraint $\nabla_{\mathbf{p}} u =\mathbf{q}$.
In what follows, we shall then study this new Lagrange formulation and denote 
\begin{equation}\label{equ:new_LM2}
	L(\mathbf{q},u,\boldsymbol{\sigma}):= F(\mathbf{q}) + G (u) + \int_{0}^{1}\int_{\mathcal{M}} (\boldsymbol{\sigma} \cdot (\nabla_{\mathbf{p}} u - \mathbf{q} ) )d\mu dt.
\end{equation}
The prominent property of this new formulation is that $F$ and $G$ are functions of $\mathbf{q}$ and $u$, respectively, which fit into the framework of ADMM algorithms.
For such a development, an augmented Lagrange multiplier based on \eqref{equ:new_LM} is provided here
\begin{equation}\label{equ:new_ALM}
	L_r(\mathbf{q},u,\boldsymbol{\sigma}):= F(\mathbf{q}) + G (u) + \int_{0}^{1}\int_{\mathcal{M}} (\boldsymbol{\sigma} \cdot (\nabla_{\mathbf{p}} u -\mathbf{q} ) )d\mu dt  + \frac{r}{2} \norm{\nabla_{\mathbf{p}} u -\mathbf{q}}_{L^2(\mathcal{P})}^2.
\end{equation}
ADMM algorithms for approaching solutions of the form in \eqref{equ:new_ALM} in the Euclidean spaces have been intensively investigated after the seminal work of Benamou and Brenier, while only recently it has been generalized for probability measure on surfaces or general manifolds \cite{LCCS2018, yu2023meanfieldmanifold}.

For the convenience of numerical treatment, let us introduce some Hilbert spaces setting, to denote $\mathcal{C}\subset H(\mbox{div}_{\mathbf{p}},\mathcal{P})$ being the set of measures satisfying $\mbox{div}_{\mathbf{p}}\boldsymbol{\sigma}=0$.

The purpose of the gradient-enhanced algorithm is not only to facilitate the implementation in a single mesh but also to significantly improve the approximation accuracy of the right-hand side gradient function.
Now we provide a skeleton of the algorithm which is often referred as ALG2.
It is an ADMM algorithm for solving \eqref{equ:new_ALM} in a function space setting.
\begin{algorithm}\caption{ALG2 in \cite{BenBre00}}
	Initial guess: $\mathbf{q}^0$, $\boldsymbol{\sigma}^1$. For k=1, 2, 3, $\ldots$
	\begin{itemize}
		\item[Step 1] Find $u^k$ which solves
		\[ u^k=\operatorname*{arginf}_{u} L_r(\mathbf{q}^{k-1},u,\boldsymbol{\sigma}^{k}). \]
		\item[Step 2] Find $q^k$ which solves
		\[ \mathbf{q}^k=\operatorname*{arginf}_{\mathbf{q}} L_r(\mathbf{q},u^{k},\boldsymbol{\sigma}^{k}). \]
		\item[Step 3] Update $\boldsymbol{\sigma}^k$ as follows for some fixed parameter $\alpha_r\in \mathbb{R}^+$
		\[ \boldsymbol{\sigma}^{k+1}=\boldsymbol{\sigma}^k + \alpha_r (\nabla_{\mathbf{p}} u^k - \mathbf{q}^k). \]
	\end{itemize}
	\label{alg:ADMM}
\end{algorithm}
Some details of the first two steps are given as follows.
For Step 1, we write down the first-order optimality condition:
\begin{equation} 
	G(h)+ \langle  r\nabla_{\mathbf{p}} \cdot \mathbf{q}^{k-1} - \Delta_{\mathbf{p}} u^k - \nabla_{\mathbf{p}} \cdot \boldsymbol{\sigma}^k ,h  \rangle=0,\quad \text{ for all }\quad  h\in H(\mbox{div}_{\mathbf{p}},\mathcal{P}),
\end{equation}
which leads to the following time-space  equation
\begin{equation}\label{equ:timespace}
	\left\{
	\begin{aligned}
		-\Delta_{\mathbf{p}} u^k &=\nabla_{\mathbf{p}}\cdot  \sigma^k - r\nabla_{\mathbf{p}} \cdot \mathbf{q}^{k-1}, \\
		r\partial_t u^k(0;\cdot) &=\rho_0-\rho^k(0,\cdot)+ r a^{k-1}(0,\cdot), \\
		r\partial_t u^k(1;\cdot) &=\rho_1-\rho^k(1,\cdot)+ r a^{k-1}(1,\cdot), \\
		\int_0^1\int_{\mathcal{M}}& u^k(t,x) d\mu dt  = 0.
	\end{aligned}\right.
\end{equation}

In Step 2, we rewrite the optimization problem to have
\begin{equation}
	\begin{aligned}
		\inf_\mathbf{q}   L_r(\mathbf{q},u^{k},\boldsymbol{\sigma}^{k})& =\inf_{\mathbf{q}} F(\mathbf{q}) + \langle \boldsymbol{\sigma}^k , (\nabla_{\mathbf{p}} u^{k} - \mathbf{q} ) \rangle + \frac{ r}{2} \norm{\nabla_{\mathbf{p}} u^k -\mathbf{q}}_{L^2(\mathcal{P})}^2\\
		& =\inf_{\mathbf{q}\in A} \frac{2}{r}\left( \frac{r}{2}\langle \boldsymbol{\sigma}^k , (\nabla_{\mathbf{p}} u^{k} - \mathbf{q}) \rangle + (\frac{r}{2})^2 \norm{\nabla_{\mathbf{p}} u^k - \mathbf{q}}_{L^2(\mathcal{P})}^2  \right)\\
		& =\inf_{\mathbf{q}\in A}  \frac{2}{r} \| \frac{r}{2}(\nabla_{\mathbf{p}} u^{k} -\mathbf{q} )+ \frac{1}{2} \boldsymbol{\sigma}^k \|_{L^2(\mathcal{P})}^2 - \frac{1}{2r}\|  \boldsymbol{\sigma}^k \|_{L^2(\mathcal{P})}^2 .
	\end{aligned}
\end{equation}
This shows that if $\nabla_{\mathbf{p}} u^k+\frac{1}{r}\boldsymbol{\sigma}^k\in A$, then we have $\mathbf{q}=\nabla_{\mathbf{p}} u^k+\frac{1}{r}\boldsymbol{\sigma}^k$ to be the solution, otherwise, one can introduce a multiplier $\lambda\geq 0$ to take care of the inequality constraints. In that case we have
\begin{equation}\label{equ:projection}
	\mathbf{q}^k=(a^k,\mathbf{b}^k)=\left\{
	\begin{aligned}
		&(\alpha^k,\boldsymbol{\beta}^k)  &&    \quad  \text{ if }   \quad   \nabla_{\mathbf{p}} u^k+\frac{1}{r}\boldsymbol{\sigma}^k \in A; \\
		&\left(\alpha^k-\lambda, \frac{\boldsymbol{\beta}^k}{1+\lambda}\right) &&\quad  \text{else},
	\end{aligned}\right.
\end{equation}
where $(\alpha^k,\boldsymbol{\beta}^k)=  \nabla_{\mathbf{p}} u^k+\frac{1}{r}\boldsymbol{\sigma}^k$. Therefore, the key to this step is to calculate the value of the Lagrange multiplier $\lambda$.
Using the complementarity condition for the multiplier, we have 
\[ \lambda\left(a^k+\frac{|\mathbf{b}^k|^2}{2}\right)=0 \quad \text{ when } \quad \nabla_{\mathbf{p}} u^k+\frac{1}{r}\boldsymbol{\sigma}^k \notin A, \]
which leads to the following polynomial equation of $\lambda$
\begin{equation}\label{equ:cubic}
	\alpha^k-\lambda +\frac{|\boldsymbol{\beta}^k|^2}{2(1+\lambda)^2}=0 \quad \Rightarrow  \quad  -\lambda^3+ (\alpha^k-2)\lambda^2+ (2\alpha^k-1)\lambda +\alpha^k +\frac{|\boldsymbol{\beta}^k|^2}{2}=0 .
\end{equation}
The following lemma shows that the above polynomial function \eqref{equ:cubic} has one real root in the range $\lambda \geq 0$.
\begin{lemma}
	Given the relations in \eqref{equ:projection}, the cubic polynomial equation in \eqref{equ:cubic} admits a solution in the range $\lambda\geq 0$.
\end{lemma}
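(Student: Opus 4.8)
The plan is a short one‑variable real‑analysis argument. Set
\[
f(\lambda):=-\lambda^3+(\alpha^k-2)\lambda^2+(2\alpha^k-1)\lambda+\alpha^k+\frac{|\boldsymbol{\beta}^k|^2}{2},
\]
which, as recorded in \eqref{equ:cubic}, is nothing but $(1+\lambda)^2\bigl(\alpha^k-\lambda+\tfrac{|\boldsymbol{\beta}^k|^2}{2(1+\lambda)^2}\bigr)$. Since the Lagrange multiplier we seek is constrained to satisfy $\lambda\ge 0$, the factor $(1+\lambda)^2$ is strictly positive there, so multiplying through by it is a reversible operation and it suffices to produce a zero in $[0,\infty)$ of either $f$ or of
\[
g(\lambda):=\alpha^k-\lambda+\frac{|\boldsymbol{\beta}^k|^2}{2(1+\lambda)^2},\qquad \lambda\ge 0 .
\]

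First I would invoke the standing hypothesis of the ``else'' branch in \eqref{equ:projection}, namely that $(\alpha^k,\boldsymbol{\beta}^k)=\nabla_{\mathbf{p}}u^k+\tfrac1r\boldsymbol{\sigma}^k\notin A$; by the definition of $A$ this is exactly the statement $\alpha^k+\tfrac12|\boldsymbol{\beta}^k|^2>0$, i.e. $f(0)=g(0)>0$. Next, the leading coefficient of $f$ is $-1<0$, so $f(\lambda)\to-\infty$ as $\lambda\to+\infty$, and hence $f(\Lambda)<0$ for some $\Lambda>0$. Continuity of $f$ on $[0,\Lambda]$ and the intermediate value theorem then give a root $\lambda^\ast\in(0,\Lambda)$ with $\lambda^\ast\ge 0$, as claimed; since $\lambda^\ast\neq-1$, it is also a genuine root of the rational equation $g(\lambda^\ast)=0$. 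If uniqueness is wanted as well (so that $\mathbf{q}^k$ in \eqref{equ:projection} is unambiguously defined), I would note that on $(-1,\infty)$ one has $g'(\lambda)=-1-|\boldsymbol{\beta}^k|^2/(1+\lambda)^3<0$, so $g$ is strictly decreasing on $[0,\infty)$, and combined with $g(0)>0$ and $g\to-\infty$ this leaves exactly one root. Finally, for that $\lambda^\ast$ the identity $g(\lambda^\ast)=0$ reads $(\alpha^k-\lambda^\ast)+\tfrac12\bigl|\boldsymbol{\beta}^k/(1+\lambda^\ast)\bigr|^2=0$, i.e. the inequality constraint defining $A$ is active at $\mathbf{q}^k=(\alpha^k-\lambda^\ast,\boldsymbol{\beta}^k/(1+\lambda^\ast))$, which is consistent with the complementarity condition used to derive \eqref{equ:cubic} and confirms $\mathbf{q}^k\in A$.

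There is essentially no deep step here; the one place that must be handled carefully is the very first sign computation — extracting $g(0)=\alpha^k+\tfrac12|\boldsymbol{\beta}^k|^2>0$ from the defining condition of the ``else'' case — because the whole argument is vacuous in the complementary branch, where $g(0)\le 0$ and $\mathbf{q}^k$ is taken to be $(\alpha^k,\boldsymbol{\beta}^k)$ with no multiplier at all. A secondary, purely bookkeeping, point is to keep $\lambda$ restricted to $[0,\infty)$ throughout, so that passing between $f$ and $g$ is legitimate and the root $\lambda^\ast$ produced is an admissible (nonnegative) Lagrange multiplier rather than a spurious solution introduced by clearing the denominator.
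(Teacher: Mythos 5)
Your argument is correct and follows essentially the same route as the paper's own proof: both rest on the sign of $f$ at $\lambda=0$ together with $f(\lambda)\to-\infty$ as $\lambda\to+\infty$ and the intermediate value theorem. You additionally supply two points the paper leaves implicit or omits --- the clean derivation of the strict inequality $f(0)>0$ from the ``else'' branch of \eqref{equ:projection} (the paper only asserts $f(0)\ge 0$, with a somewhat garbled justification), and the uniqueness of the nonnegative root via the monotonicity of $g$ on $[0,\infty)$, which the paper's computation of the stationary points of $f$ gestures at but never actually uses.
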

\begin{proof}
	Define $f(x)= -x^3+ (\alpha^k-2)x^2+ (2\alpha^k-1)x +\alpha^k +\frac{|\boldsymbol{\beta}^k|^2}{2}$. By taking the derivative of $f$, we find that $x=-1$ and $x=\frac{2\alpha^k-1}{3}$ are the two roots of $f'(x)$. This shows that they are the two local stationary points of $f(x)$. We have $f(-1)=\frac{|\boldsymbol{\beta}^k|^2}{2}>0$, $f(\frac{2\alpha^k-1}{3})=4(\frac{\alpha^k+1}{3})^3 +\frac{|\boldsymbol{\beta}^k|^2}{2}$. In particular $f(0)=\alpha^k +\frac{|\boldsymbol{\beta}^k|^2}{2}\geq 0$ when $\lambda\geq 0$, $\lim_{x\to +\infty} f(x)=-\infty$. This tells that the third-order polynomial equation $f(x)=0$ at least admits a solution for $x \geq 0$.
\end{proof}

The solution of \eqref{equ:cubic} can be calculated using either explicit formula or Newton's iteration.

\subsection{Discretization of the time-space Poisson equation}
In this subsection, the time-space Poisson equation is discretized using the coupled finite difference and finite element method.
To simplify the notation, we consider the following time-space Poisson equation
\begin{equation} \label{equ:tsharmonic}
	-\partial_{tt} u - \Delta_{g} u = \nabla_{\mathbf{p}} \cdot \mathbf{f}, \quad \text{in}\quad (0,1) \times \mathcal{M},
\end{equation}
where $\Delta_{g}$ represents the Laplace-Beltrami operator on the manifold $\mathcal{M}$ with metric $g$. The equation is subject to Neumann boundary conditions:
\begin{equation}\label{equ:nbc}
	\partial_t u(0, \cdot) = u^0(\cdot) \quad \text{and} \quad \partial_{t} u(1, \cdot) = u^1(\cdot)
\end{equation}
and it also satisfies the compatibility condition:
\begin{equation}
	\int_0^1 \int_{\mathcal{M}} u d\mu dt = 0.
\end{equation}

We firstly consider the discretization in time using the central finite difference methods. For such purpose, we divide the interval $[0, 1]$ into $N_t$ subintervals:
\begin{equation}
	0 = t_0 < t_1 < t_2 < \cdots < t_{N_t-1} < t_{N_t} = 1,
\end{equation}
with $\tau = \frac{1}{N_t}$ and $t_j = j\tau $.


To incorporate  the Neumann boundary condition \eqref{equ:nbc}, we adopt the ghost point method \cite{Leveque2007}, and define the discrete difference  operator $d_{tt}$ as 
\begin{equation}
	d_{tt}u_j(\mathbf{x})  = 
	\left \{
	\begin{array}{ll}
		\frac{2u_j(\mathbf{x}) -  2u_{j+1}(\mathbf{x})}{\tau^2}  & j =  0, \\[4pt]
		\frac{-u_{j-1}(\mathbf{x})+2u_{j}(\mathbf{x}) - u_{j+1}(\mathbf{x})}{\tau^2} & 1 \le j \le N_t-1, \\[4pt]
		\frac{2u_{j}(\mathbf{x}) -  2u_{j-1}(\mathbf{x})}{\tau^2} & j = N_t.
	\end{array}
	\right.
\end{equation}
Additionally, we  define the auxiliary  right hand side function  as
\begin{equation}
	\hat{u}_j(\mathbf{x}) = 
	\left \{
	\begin{array}{ll}
		\frac{2u_0(\mathbf{x})}{\Delta t}   & j =  0, \\[4pt]
		0 & 1 \le j \le N_t-1, \\[4pt]
		\frac{2u_1(\mathbf{x})}{\Delta t} & j = N_t.
	\end{array}
	\right.
\end{equation}

The semi-discretization  of time-space  equation \eqref{equ:tsharmonic} is 
\begin{equation}\label{equ:semidiscretization}
	-d_{tt} u_j(\mathbf{x}) -  \Delta_{g}u_j(\mathbf{x}) =  \nabla_{\mathbf{p}} \cdot  \mathbf{f}_j(\mathbf{x}) + \hat{u}_j(\mathbf{x})  \quad \forall 0\le j \le N_t.
\end{equation}

Let $\mathcal{M}_h$ be a polyhedral approximation of $\mathcal{M}$ with planar triangular surface, and $\mathcal{T}_h$ be the mesh associated with $\mathcal{M}_h$, where $h = \max_{T\in\mathcal{T}_h}\mbox{diam}(T)$ is the maximum diameter. We focus on the continuous linear finite element space defined on $\mathcal{M}_h$, i.e.
\begin{equation}
	S_h = \{v_h \in C^0(\mathcal{M}_h): v_h|_T \in \mathbb{P}_1(T), \quad  \forall T \in \mathcal{M}_h \}. 
\end{equation}

The surface finite element approximation of the semi-discrete problem is to find $u_{j,h} \in S_h$ such that 
\begin{equation}\label{equ:fulldiscrete}
	-(d_{tt}u_{j,h}, v_h)_{\mathcal{M}_h} + (\nabla_{g_h} u_{j,h}, \nabla_{g_h}v_h)_{\mathcal{M}_h} =  (\partial_t c_j + \nabla_{g_h}\cdot \mathbf{d}_j + \hat{u}_j, v_h)_{\mathcal{M}_h} ,
\end{equation}
for $ 0\le j \le N_t$. Here $(\cdot, \cdot)_{\mathcal{M}_h}$ denotes the inner product on $\mathcal{M}_h$, and we decompose the vector-valued function $\mathbf{f}_j$ as $\mathbf{f}_j = (c_j, \mathbf{d}_j)$, where $c_j$ is the contribution from the temporal component and $\mathbf{d}_j$ is from the spatial component.  In the above equation, $\nabla_{g_h}$ means the discrete surface gradient operator on the discrete surface $\mathcal{M}_h$ with metric tensor $g_h$.

\subsection{Gradient enhanced ADMM} \label{ssec:gre}
The key difference between the standard time-space  equation and the equation \eqref{equ:timespace} in ALG2 is that the right hand side function $\mathbf{f}$ is the intermediate iterative solution, which belongs to the discrete function spaces.  A further looking at Step 3 of ALG2, we find that $\mathbf{f}$ actually involves the gradients of the numerical solutions. It is well-known from numerical analysis of partial differential equations that the accuracy of gradient of numerical solutions will be typically one-order less than the accuracy of the solution itself. Going from Step 3 to Step 1, we need to take a further divergence to obtain real right hand side functions. Theoretically, the accuracy of $\nabla_{\mathbf{p}}\cdot \mathbf{f}$ is of zeroth order when the divergence operator applies to $\mathbf{f}$ which results from gradients of standard linear finite element/difference solutions.   Also, we notice that the gradient of $u$ is a piece-wise constant. To make the divergence $\nabla_{\mathbf{p}}\cdot \mathbf{f}$ well-defined, we introduce an interpolation between piece-wise constant function and piece-wise linear polynomial function.


In this subsection,  we address the above issues by introducing a gradient enhanced algorithm. For such purpose, we introduce the time and space gradient recovery operators.  Denote $S_{\tau}$ to be the continuous linear finite element space on the time partition, i.e. consists of temporal piece-wise linear functions.
Let $G_{\tau}:S_{\tau}\rightarrow S_{\tau} $ be the polynomial preserving  recovery (PPR) operator defined in the Subsection \ref{ssec:ppr}. 
It is worth to point out that $G_{\tau}$ is consistent with the central finite difference scheme on the interior node points and consistent with one side second-order finite difference scheme on boundary node.  Let $\{\phi_0(t), \phi_1(t), \cdots, \phi_{N_t}(t)\}$ be the set of nodal basis functions for $V_{\tau}$ and $\boldsymbol{\phi} = (\phi_0(t), \phi_1(t), \cdots, \phi_{N_t}(t))^T$. For every function  in $v_{\tau}\in S_{\tau}$, we rewrite it as $v_{\tau} = \mbox{V}^T\boldsymbol{\phi}$. The recovered gradient $G_{\tau}v_{\tau}$ can be expressed as
\begin{equation}
	G_{\tau}v_{\tau} = (\mathbf{B_t}  \mbox{V})^T\boldsymbol{\phi}_t,
\end{equation}
where $\mathbf{B_t}$ is the matrix representation for the temporal recovery operator $G_{\tau}$. We denote its discrete form as
\begin{equation}\label{equ:timedf}
	d_t v_j = (\mathbf{B_t} \mbox{V})_j.
\end{equation}

Let $G_h: S_h \rightarrow S_h\times S_h \times S_h$ be the parametric polynomial preserving recovery (PPPR) operator for the surface gradient operator defined in the Subsection \ref{ssec:PPPR}.
If we denote the basis of $S_h$ as $\{\psi_j\}_{j=0}^{N_s}$ and represent an arbitrary function $w_h \in S_h$ as 
$w_h = W^T\boldsymbol{\psi}$, the recovered surface gradient $G_hw_h$ can be expressed as
\begin{equation}
	G_{h}v_h = \left( (\mathbf{B_x}\mbox{V})^T\boldsymbol{\psi}, (\mathbf{B_y} \mbox{V})^T\boldsymbol{\psi}, 
	(\mathbf{B_z} \mbox{V})^T\boldsymbol{\psi} \right)
\end{equation}
as demonstrated in \cite{GuoYangZhu}.  In particular, the differentiation matrix $\mathbf{B}_x, \mathbf{B_y}, \mathbf{B_z}$ are sparse. Once the numerical solution is available, the recovery gradient is just sparse matrix and vector multiplication which can be done quite efficiently. 

\begin{algorithm}
	\caption{Gradient enhanced ALG2}
	\label{alg:grad_ADMM2}
	\begin{algorithmic}[1]
			\REQUIRE{Initial guesses: $\mathbf{q}_{h}^0=(\mathbf{q}_{j,h}^0)_{j=1}^{N_t},\sigma_{h}^1=(\sigma_{j,h}^1)_{j=1}^{N_t}$; Discretization: $N_t, \mathcal{M}_h$.}
			\FOR{$k=1,2,\ldots$}
			\STATE \textbf{Step 1:} 
			Find $u_{j,h}^k$ that solves
			\begin{equation}\label{equ:iter_gea}
				-(d_{tt}u_{j,h}^k, v_h)_{\mathcal{M}_h} + (\nabla_{g_h} u_{j,h}^k, \nabla_{g_h}v_h)_{\mathcal{M}_h} =  (G_{\tau} c_j^k + G_h\cdot \mathbf{d}_j^k + \hat{u}_j, v_h)_{\mathcal{M}_h}, 
			\end{equation}
			for $j=1,\ldots,N_t$, where $(c_j^k,\mathbf{d}_j^k)=\mathbf{f}^k=\sigma_{j,h}^k-r\cdot \mathbf{q}_{j,h}^{k-1}$.
			
			\STATE \textbf{Step 2:} For all $j=1,2,\ldots,N_t$:
			\IF{$G_{\mathbf{p}} u_{j,h}^k+\frac{1}{r}\boldsymbol{\sigma_{j,h}}^k\in A$}
			\STATE $$\mathbf{q}_{j,h}^k=G_{\mathbf{p}} u_{j,h}^k+\frac{1}{r}\boldsymbol{\sigma_{j,h}}^k;$$
			\ELSE
			\STATE{Substitute the roots of \eqref{equ:cubic} into \eqref{equ:projection} to obtain $\mathbf{q}_{j,h}^k$.}
			\ENDIF
			
			\STATE \textbf{Step 3:} 
			\STATE 	Update $\boldsymbol{\sigma_{j,h}}^k$ for some fixed parameter $\alpha_r\in \mathbb{R}^+$
			\begin{equation*}
				\boldsymbol{\sigma_{j,h}}^{k+1}=\boldsymbol{\sigma_{j,h}}^k + \alpha_r (G_{\mathbf{p}} u_{j,h}^k - \mathbf{q}_{j,h}^k). 
			\end{equation*}
			\ENDFOR
			\RETURN{($u^k_{j,h})_{j=1}^{N_t},\ (\mathbf{q}^k_{j,h})_{j=1}^{N_t}$, and $(\sigma^k_{j,h})_{j=1}^{N_t}$.}
	\end{algorithmic}
\end{algorithm}

The proposed gradient enhanced algorithm for \eqref{equ:fulldiscrete} is to find $u_{j,h}$ that satisfies
\begin{equation}\label{equ:gea}
	-(d_{tt}u_{j,h}, v_h)_{\mathcal{M}_h} + (\nabla_{g_h} u_{j,h}, \nabla_{g_h}v_h)_{\mathcal{M}_h} =  (G_{\tau} c_j + G_h\cdot \mathbf{d}_j + \hat{u}_j, v_h)_{\mathcal{M}_h}, 
\end{equation}
for $0\le j \le N_t,$ with  $c_j \in S_h$ and $\mathbf{b}_j\in S_h \times S_h \times S_h$. 
We use the notation $G_{\mathbf{p}} := (G_{\tau}, G_h)$ to denote the temporal-spatial gradient recovery operator. The main idea of the gradient-enhanced ADMM is to implement the gradient/divergence operation appeared in the ADMM steps via the gradient recovery techniques. We summarize it in Algorithm \ref{alg:grad_ADMM2}.
	
This provides a new angle to better understand the existing averaging techniques for dealing with the dual variables in the ADMM algorithms for the dynamic optimal transportation, e.g. \cite{LCCS2018}. However, for gradient recovery on surfaces, it has been illustrated in \cite{DongGuo2020,DongGuoGuo2024} that simple averaging may be failed to recover a gradient to high accuracy due to the mesh conditions. It was also shown there that PPPR is the most robust gradient recovery method on surfaces. In the following, we use matrix notation to give a fast implementation of the PPPR method.

Let $\mathbf{S}$ and $\mathbf{M}$ denote the stiffness matrix and mass matrix in \eqref{equ:gea}. Denote the numerical solution $u_{j,h}= \mbox{U}_j^T\boldsymbol{\psi}$ and $\mbox{U} = (\mbox{U}_0^T, \cdots, \mbox{U}_{N_s}^T)^T$. Then, we reformulate \eqref{equ:gea} in the matrix form as
\begin{equation}\label{equ:linsys}
	\mathbf{A}\mbox{U} = \mbox{F},  
\end{equation}
where 
\begin{equation}
	\mathbf{A} = \mathbf{E}\otimes \mathbf{M} + \mathbf{I}\otimes \mathbf{S}
\end{equation}
with $\mathbf{E}$ being the matrix corresponding to the second-order finite difference scheme, and $\mathbf{I}$ is the $(N_t+1)\times(N_t+1)$ identity matrix, while $\otimes$ is the Kronecker product.

To compute the right hand side,  the matrix representation of the gradient recovery operator suggests a more efficient implementation leveraging sparse matrix and vector multiplication. Without loss of generality, assume 
$c_j = \mbox{C}_j^T\boldsymbol{\psi}$ and $\mathbf{d}_j = ((\mbox{D}_j^x)^T\boldsymbol{\psi}, (\mbox{D}_j^y)^T\boldsymbol{\psi}, (\mbox{D}_j^z)^T\boldsymbol{\psi})$. Define the matrix $\mathbf{C} = (\mbox{C}_0, \mbox{C}_1, \cdots, \mbox{C}_{N_t})$ and $\mathbf{D}_* = (\mbox{D}_0^*, \mbox{D}_1^*, \cdots, \mbox{D}_{N_t}^*)$ for $* = x, y, z$.  
The time-space recovered divergence is given by
\begin{equation}
	\mathbf{C}\mathbf{B_t} + \mathbf{B}_x\mathbf{D_x} + \mathbf{B_y}\mathbf{D_y} + \mathbf{B_z}\mathbf{D_Z}.
\end{equation}
The matrix counterpart of $\mbox{F}$ can be represented as 
\begin{equation}
	\mathbf{F} = \mathbf{M}(\mathbf{C}\mathbf{B_t} + \mathbf{B}_x\mathbf{D_x} + \mathbf{B_y}\mathbf{D_y} + \mathbf{B_z}\mathbf{D_z})\mathbf{I}
\end{equation}
plus the modification to incorporate the boundary condition:
\begin{equation}
	\mathbf{F}(:,1) = \mathbf{F}(:,1) + \mathbf{M}\hat{\mbox{U}}_0, \quad
	\mathbf{F}(:,N_t+1) = \mathbf{F}(:,N_t+1) + \mathbf{M}\hat{\mbox{U}}_{N_t+1}.
\end{equation}
Then the right hand side is $\mbox{F} = \mathbf{F}(:)$.

%
%

\subsection{Fast solver of linear system}\label{ssec:fast}
In this subsection, we introduce a fast solver for the large linear system \eqref{equ:linsys}.
The idea is to decouple the time discretization using spectral decomposition and pre-compute the LU decomposition of decoupled linear systems which is similar to the pre-computation idea in \cite{LCCS2018}.

The semidiscretization \eqref{equ:semidiscretization} can be reformulated in the following vector form 
\begin{equation}\label{equ:matequ}
\mathbf{E} \mathbf{u} - \mathbf{I} \Delta \mathbf{u} = \mathbf{r}, 
\end{equation}
where $\mathbf{u} = (u_1, u_2, \cdots, u_{N_t})^T$ and $\mathbf{r} = (\nabla_{\mathbf{p}} \cdot  \mathbf{f}_1(\mathbf{x}) + \hat{u}_1(\mathbf{x}), \cdots, \nabla_{\mathbf{p}} \cdot  \mathbf{f}_{N_t}(\mathbf{x}) + \hat{u}_{N_t}(\mathbf{x}))^T$.

Let $\mathbf{t} = (t_0, \cdots, N_t)^T$. Denote the eigen pair of  the matrix $\mathbf{E}$ by  $(\gamma_i,  \mathbf{e}_i)$. It is not hard to see that $\gamma_i = \frac{2-2\cos(i\pi\tau)}{\tau^2}$ and $\mathbf{e}_i = \cos(i\pi\mathbf{t})$ for $i = 0, \ldots, \quad N_t$. Notice that $\mathbf{e}_i$ forms a basis of $\mathbb{R}^{N_t}$. Then, we can write $\mathbf{u}$ and $\mathbf{r}$ as
\begin{equation}\label{equ:expand}
\mathbf{u} = \sum_{i=0}^{N_t} w_i \mathbf{e}_i\quad \text{ and } \quad \mathbf{r} =  \sum_{i=0}^{N_t} r_i \mathbf{e}_i.
\end{equation}
Plugging into the matrix equation \eqref{equ:matequ} gives
\begin{equation}\label{equ:coupled}
\sum_{i=0}^{N_t}\gamma_i w_i \mathbf{e}_i + \Delta( \sum_{i=0}^{N_t} w_i \mathbf{e}_i) = \sum_{i=0}^{N_t} r_i \mathbf{e}_i. 
\end{equation}
The linear independence of $\{ \mathbf{e}_i\}$ suggests that we can decouple the equation \eqref{equ:coupled} into $N_t+1$ equations 
\begin{equation}\label{equ:decoupled}
\gamma_i w_i + \Delta w_i = r_i
\end{equation}
for $i = 0, \dots, N_t$.

The decoupled equations \eqref{equ:decoupled} can be solved using the surface finite element method with the recovered gradients in the right hand side as in Subsection \ref{ssec:gre}. The corresponding linear systems
are 
\begin{equation}
(\gamma_i \mathbf{M} + \mathbf{S})\mbox{W}_i = \mbox{R}_i. 
\end{equation}
The key observation is that the matrix can be pre-factorized  as $\gamma_i \mathbf{M} + \mathbf{S} = \mathbf{L}_i\mathbf{U}_i$. 
During the iteration in Algorithm \ref{alg:grad_ADMM2}, the solution of \eqref{equ:decoupled} is realized by a sparse matrix  multiplication with a vector.

Let $\mathbf{R}= (\mbox{R}_0, \cdots, \mbox{R}_{N_t})$. Then, $\mathbf{R}$ can be easily obtained from $\mathbf{F}$ using the following relationship
\begin{equation}
\mathbf{R} = \mathbf{F}\mathbf{H}(\mathbf{H}^T\mathbf{H})^{-1},
\end{equation}
where 
\begin{equation}
\mathbf{H} = (\frac{\mathbf{e}_0}{\|\mathbf{e}_0\|}, \cdots, \frac{\mathbf{e}_{N_t}}{\|\mathbf{e}_{N_t}\|}).
\end{equation}
Again, the inverse of $\mathbf{H}^T\mathbf{H}$ can be pre-computed and reused.

Once we solve \eqref{equ:decoupled} to get $\mathbf{W}= (\mbox{W}_0, \cdots, \mbox{W}_{N_t})$, the solution of the original time-space  equation \eqref{equ:linsys} can be obtained as follows
\begin{equation}
\mathbf{U} = \mathbf{W}\mathbf{H}^T,
\end{equation}
and $\mbox{U} = \mathbf{U}(:)$.

\section{Numerical examples}\label{sec:num}
We present in the following series of numerical examples to demonstrate the performance of the proposed gradient enhanced ADMM algorithm. All the numerical tests are carried out on a 14-inch MacBook Pro Apple M1 Chip with 16GB memory.

\begin{figure}[!h]
\centering
\subfigure[Discrete $L^2$ error\label{fig:square_l2}]{
	\includegraphics[width=0.47\textwidth]{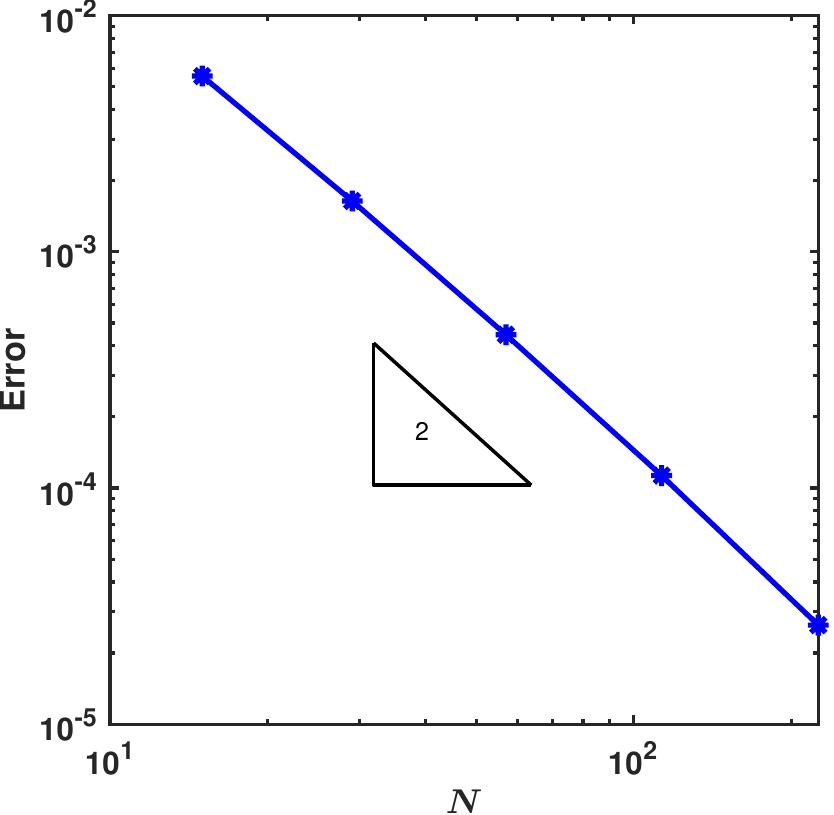}
}
\subfigure[Discrete $L^1$ error\label{fig:square_l1}]{
	\includegraphics[width=0.47\textwidth]{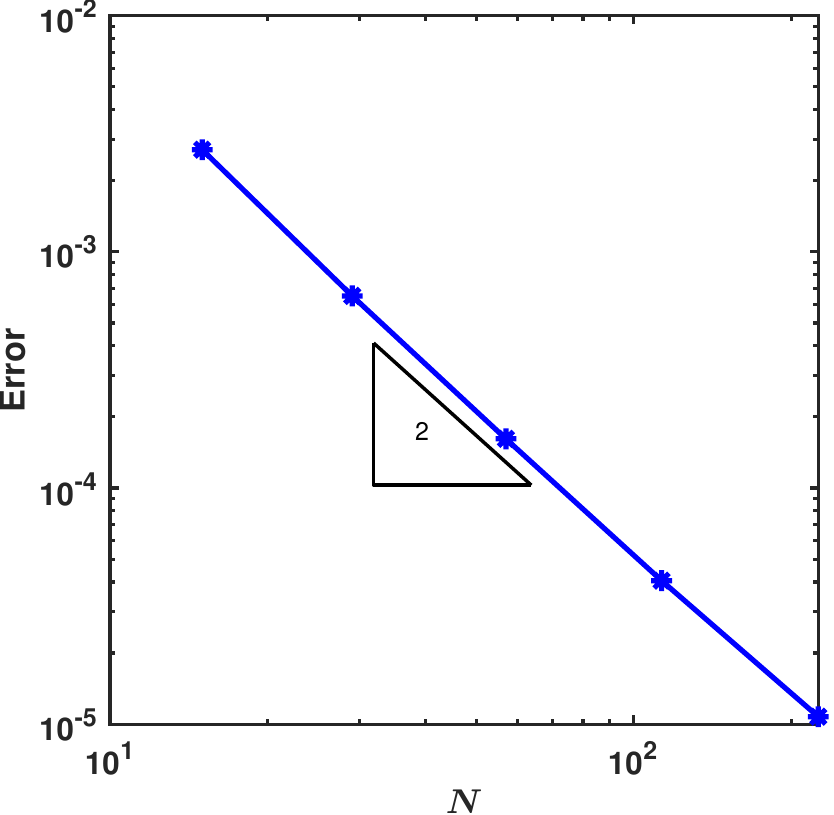}
}
\caption{Numerical error of the optimal transport on the unit square: (a) discrete $L^2$ error; (b) discrete $L^1$ error.}
\label{fig:convergencerate}
\end{figure}

\subsection{Numerical convergence rate} 
We first test the convergence rate of the proposed algorithm using the benchmark example on a 2D flat domain, in which case an exact solution is available. The computational domain is selected as $\Omega = (0,1)\times(0,1)$. The initial and terminal distributions are Gaussian with standard deviation $\sigma = 0.01$ and mean values $\mu_0 = (0.3, 0.3)$ and $\mu_1 = (0.7, 0.7)$ respectively. The ground truth \cite{PeyCut19} is given by
\begin{equation*}
\rho(t, \mathbf{x}) = \frac{1}{\sigma\sqrt{2\pi}}e^{-\frac{1}{2}\left(\frac{\mathbf{x}-\mu(t)}{\sigma}\right)^2}
\end{equation*}
where $\mu(t) = (1-t)\mu_0 + t\mu_1$.

We show the convergence rate of the proposed gradient enhanced ADMM algorithm on uniform meshes.
The uniform meshes are obtained by splitting $\Omega$ into $(N-1)^2$ sub-squares and then dividing each sub-square into 2 triangles. In the numerical test, we choose $N_t = 2N$ with $N = 15, 29, 57, 113, 225$, and $\alpha_r = 0.02$. The number of iterations is set to be $51$. We compute the errors between the computed results $\rho_h$ and its ground truth $\rho$. To quantify the accuracy, we shall measure the error in the following discrete $L^p$ norm:
\begin{equation}
\|v\|_p = \left( \sum_{j=1}^{N_s-1}\tau \|v(\cdot, t_j) \|_{L^p(\Omega)}^p \right)^{\frac{1}{p}}. 
\end{equation}
The numerical errors in discrete norms are displayed in Figure \ref{fig:convergencerate}. From the data in Figure \ref{fig:convergencerate}, it is apparent that the numerical errors in both discrete $L^2$ and $L^1$ norms decay at the optimal rate of $\mathcal{O}(N^{-2})$.

\subsection{Computational efficiency}\label{ssec:cputime} 
In this subsection, we test the efficiency of our algorithms by comparing the CPU time of the proposed algorithms using the fast solver and using standard FDM-sFEM solver. We test the optimal transport (OT) problem on the unit sphere. The initial (terminal) probability distribution $\rho_0$ ($\rho_1$) is the spherical Gaussian \cite{ChaGui2021} with mean $\mu_0 = (0, 0, 1)$ ($\mu_1 = (0, 0, -1)$) and variance $\sigma = 0.1$ at the north (south) pole.


\begin{figure}[!h]
\centering
\subfigure[$t=0$ \label{fig:sphere_1}]
{\includegraphics[width=0.19\textwidth]{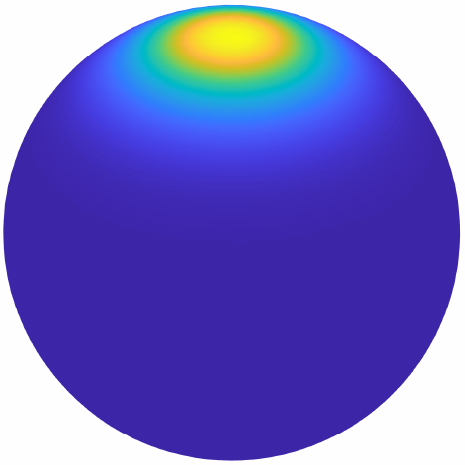}}
\subfigure[$t=0.25$ \label{fig:sphere_2}]
{\includegraphics[width=0.19\textwidth]{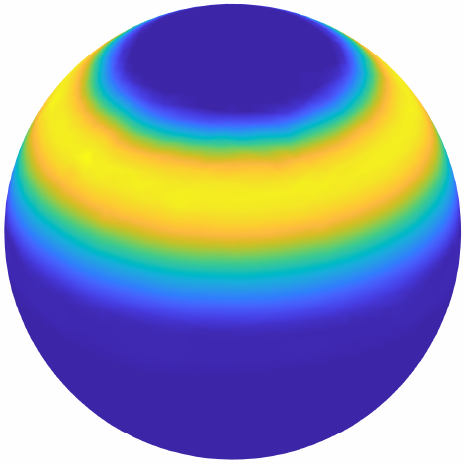}}
\subfigure[$t=0.5$ \label{fig:sphere_3}]
{\includegraphics[width=0.19\textwidth]{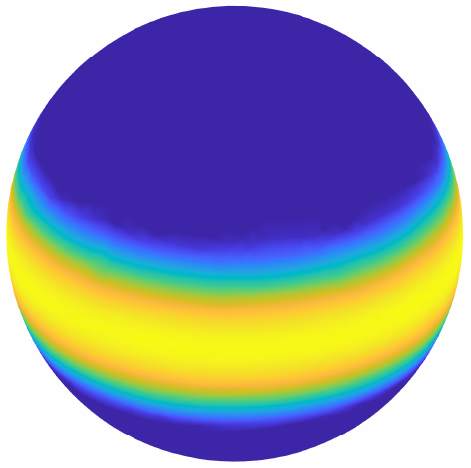}}
\subfigure[$t=0.75$ \label{fig:sphere_4}]
{\includegraphics[width=0.19\textwidth]{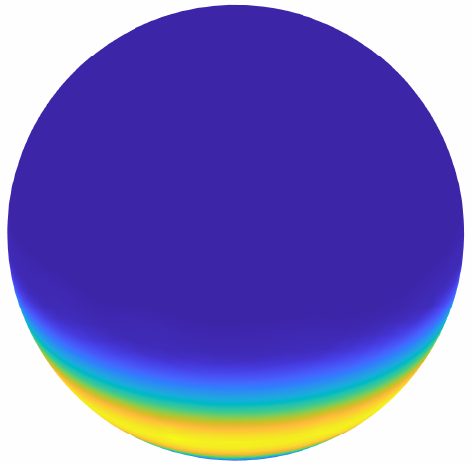}}
\subfigure[$t=1$ \label{fig:sphere_5}]
{\includegraphics[width=0.19\textwidth]{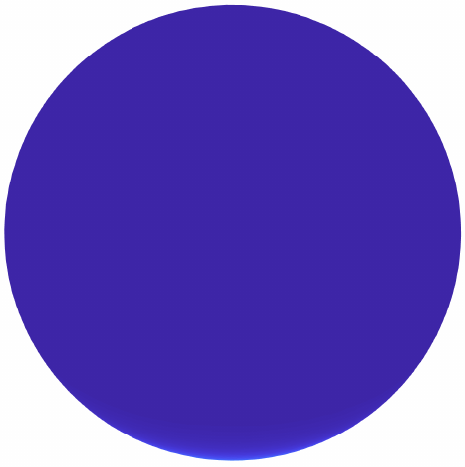}}
\caption{Evolution of mass distribution on the unit sphere. }\label{fig:sphere_test}
\end{figure}

\begin{table}[!h]
\centering
\caption{Comparison of the CPU time for optimal transport (OT) on the unit sphere between ALG2 using the fast solver (F) and ALG2 using built-in solver in MATLAB (C)}\label{tab:comp}
\resizebox{0.75\textwidth}{!}{%
	\begin{tabular}{c|c|cc|cc}
		\hline \multirow{2}{*}{$N_s$} &  \multirow{2}{*}{Iter}  & \multicolumn{2}{c|}{ Time (s) } & \multicolumn{2}{c}{ Time(s)/Iter } \\
		\cline { 3 - 6 }   &  & $\mathrm{F}$  & $\mathrm{C}$ & $\mathrm{F}$ & $\mathrm{C}$  \\
		\hline
		213 & 51 & 0.35 & 6.22 & 6.02e-03 & 1.21e-01 \\ 
		846&  51 & 1.28 & 320 & 2.25e-02 &6.27e+00 \\
		3378&  51 & 8.42 & -- & 1.47e-01 & -- \\
		13506&  51 & 75.28 & -- & 1.30e+00 & -- \\ \hline
	\end{tabular}%
}
\end{table}

In the test, we select $N_t = 2\lfloor \sqrt{N_s}\rfloor$ and 51 iterations. The regularization parameter is set to $\alpha_r = 0.0001$. In Figure \ref{fig:sphere_test}, we plot the evolution of the distribution at several values of $t$. Clearly, the mass distributed with spherical Gaussian at the north pole is transported evenly along every geodesic to the south pole.

In Table \ref{tab:comp}, we compare the CPU times for the gradient-enhanced ALG2 algorithm using the fast solver described in Subection \ref{ssec:fast} and the algorithms using the built-in solver in MATLAB. What stands out in the table is that the algorithm using the built-in solver in MATLAB runs out of memory after 2 uniform refinements of the initial mesh. Closer inspection of values in the table shows that the fast solver achieves more than a hundredfold acceleration per every iteration.

\subsection{Numerical simulation of dynamical optimal transport on general surfaces} 
In this subsection, we are devoted to further investigating the performance of the proposed algorithms for dynamic optimal transport on general surfaces.

\subsubsection{Impact of the augmentation parameter $\alpha_{r}$} 
We investigate the impact of the parameter $\alpha_{r}$ (in Step 3 of Algorithm \ref{alg:ADMM}) on the simulation results. We again consider dynamic optimal transport on the unit sphere, where the initial ($t=0$) and terminal ($t=1$) distributions are spherical Gaussian as plotted in Figure \ref{fig:spheremix}. In the test, we choose $N_s = 3378$ and $N_t = 51$. Figure \ref{fig:spheremix} compares the results obtained from two different values of $\alpha_r$. Interestingly, pseudo transport was observed when $\alpha_r = 1$. In that case, the Gaussian bump is not really transported, and we can observe two Gaussian bumps at the initial and terminal positions.

\begin{figure}[!h]
\centering
\subfigure[$t=0$ \label{fig:spheremix_1}]
{\includegraphics[width=0.19\textwidth]{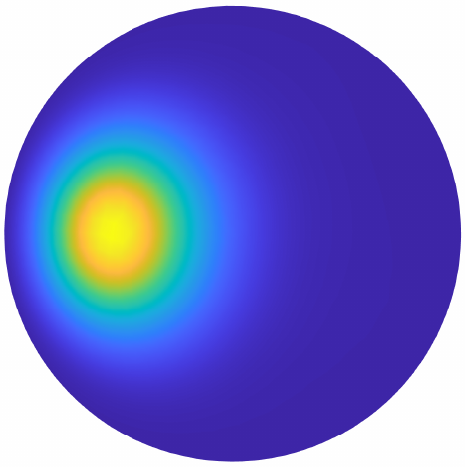}}
\subfigure[$t=0.25$ \label{fig:spheremix_2}]
{\includegraphics[width=0.19\textwidth]{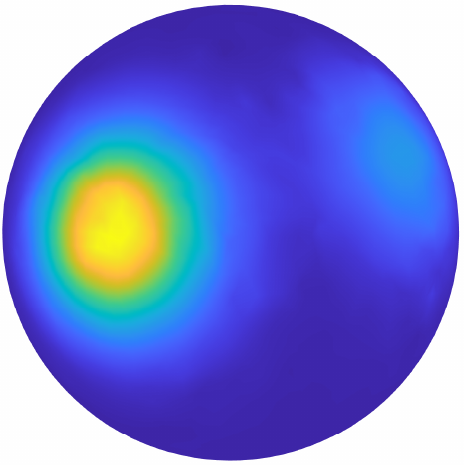}}
\subfigure[$t=0.5$ \label{fig:spheremix_3}]
{\includegraphics[width=0.19\textwidth]{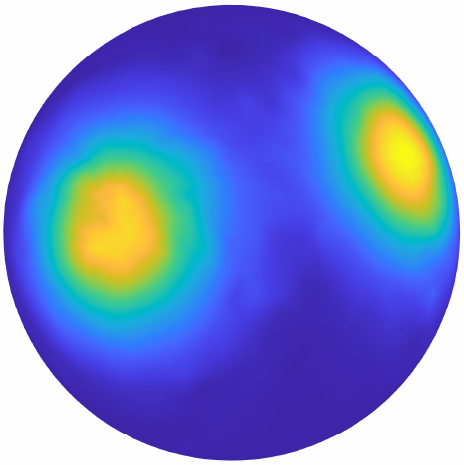}}
\subfigure[$t=0.75$ \label{fig:spheremix_4}]
{\includegraphics[width=0.19\textwidth]{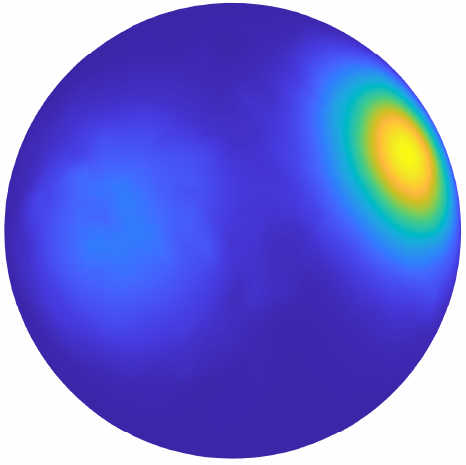}}
\subfigure[$t=1$ \label{fig:spheremix_5}]
{\includegraphics[width=0.19\textwidth]{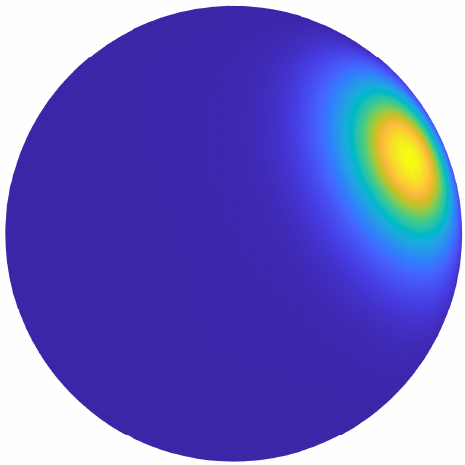}}
\subfigure[$t=0$ \label{fig:spheremix_1_d}]
{\includegraphics[width=0.19\textwidth]{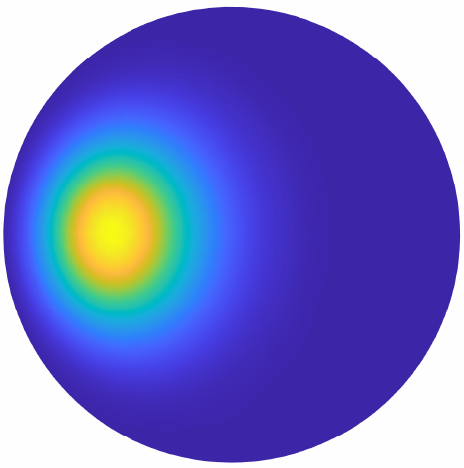}}
\subfigure[$t=0.25$ \label{fig:spheremix_2_d}]
{\includegraphics[width=0.19\textwidth]{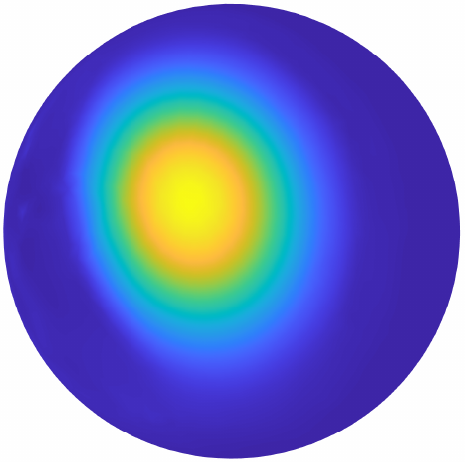}}
\subfigure[$t=0.5$ \label{fig:spheremix_3_d}]
{\includegraphics[width=0.19\textwidth]{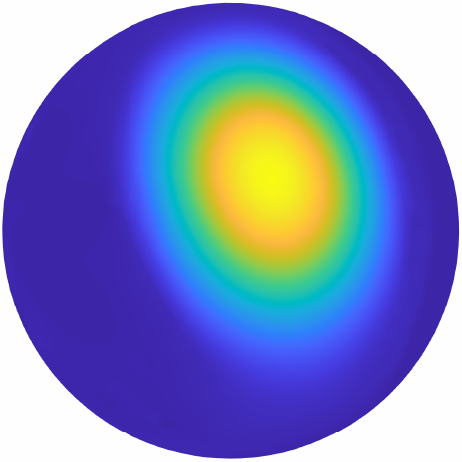}}
\subfigure[$t=0.75$ \label{fig:spheremix_4_d}]
{\includegraphics[width=0.19\textwidth]{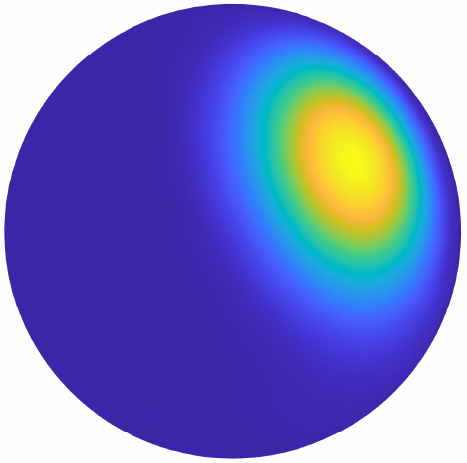}}
\subfigure[$t=1$ \label{fig:spheremix_5_d}]
{\includegraphics[width=0.19\textwidth]{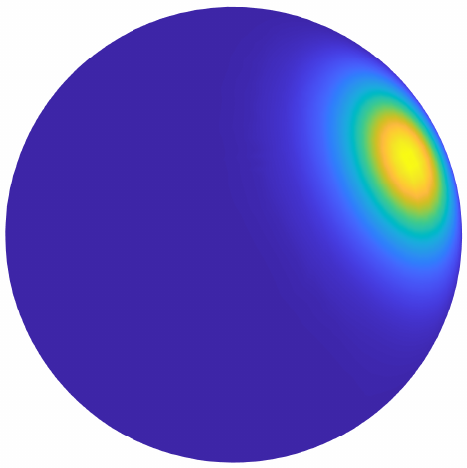}}
\caption{Effect of the regularizing parameter $\alpha_r$. The first row is the numerical result for $\alpha_r = 1$ and the second row is the numerical result of $\alpha_r = 0.0001$.}\label{fig:spheremix}
\end{figure}

\subsubsection{Simulation on surfaces with high curvature and more complex topology}
In this part, we present more numerical results on non-spherical geometry.  Our first example of surfaces with complicated geometry is the  Enzensberger-Stern algebraic surface \cite{DM2016, Guo2020} which is the zero level set of the following  function 
\begin{equation*}
\phi(x)=400\left(x_1^2 x_2^2+x_2^2 x_3^2+x_1^2 x_3^2\right)-\left(1-x_1^2-x_2^2-x_3^2\right)^3-40.
\end{equation*} 
The numerical challenge arises from the high-curvature regions, where the surface meshes can be highly degraded. In our numerical test, we choose the initial (terminal) distribution as the indicator function on the high-curvature angle, illustrated in Figure \ref{fig:stern}. The propagation of indicator functions upward is observable in Figure \ref{fig:stern}. It shows that the mass has been transported along the geodesic on this manifold.

Another example of the dynamical formulation of optimal transport on a genus-1 teapot which is even more complex both in shape and topology, as examined in \cite{LCCS2018}. For this test, we set $\alpha_r = 0.0001$, and the number of iterations is $51$. The simulation results, as depicted in Figure \ref{fig:teapot}, aligning with the results presented in \cite{LCCS2018}, indicate the efficiency of the proposed method also for highly nontrivial surfaces.


\begin{figure}[!h]
\centering
\subfigure[$t=0$\label{fig:stern_1}]
{\includegraphics[width=0.19\textwidth]{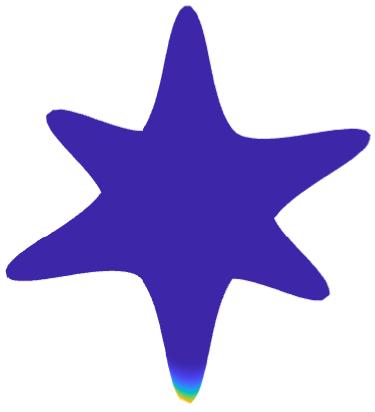}}
\subfigure[$t=0.25$\label{fig:stern_2}]
{\includegraphics[width=0.19\textwidth]{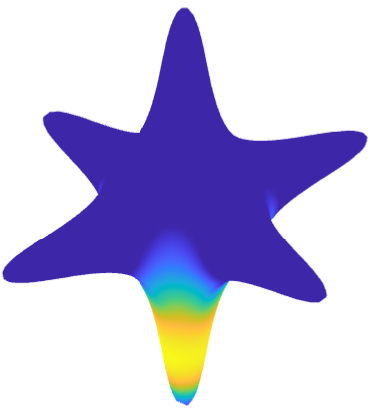}}
\subfigure[$t=0.5$\label{fig:stern_3}]
{\includegraphics[width=0.19\textwidth]{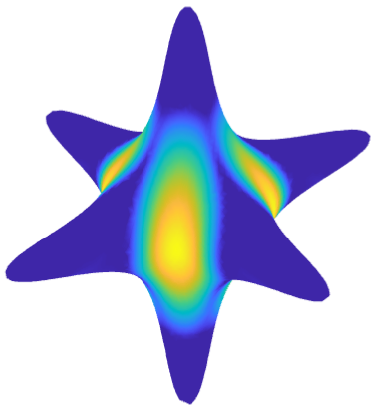}}
\subfigure[$t=0.75$\label{fig:stern_4}]
{\includegraphics[width=0.19\textwidth]{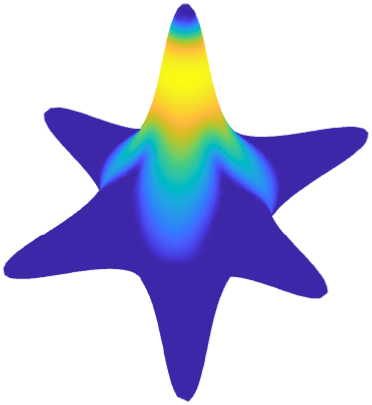}}
\subfigure[$t=1$\label{fig:stern_5}]
{\includegraphics[width=0.19\textwidth]{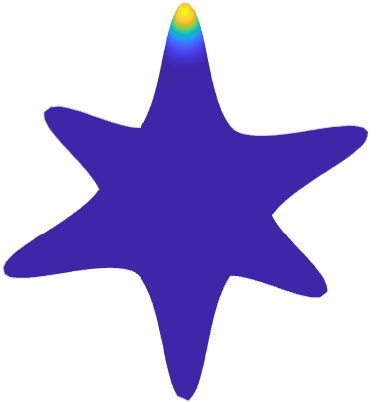}}
\caption{Evolution of mass distribution on Enzensberger-Stern star algebraic surface.}\label{fig:stern}
\end{figure}

\begin{figure}[!h]
\centering
\subfigure[$t=0.25$\label{fig:teapot_1}]
{\includegraphics[width=0.19\textwidth]{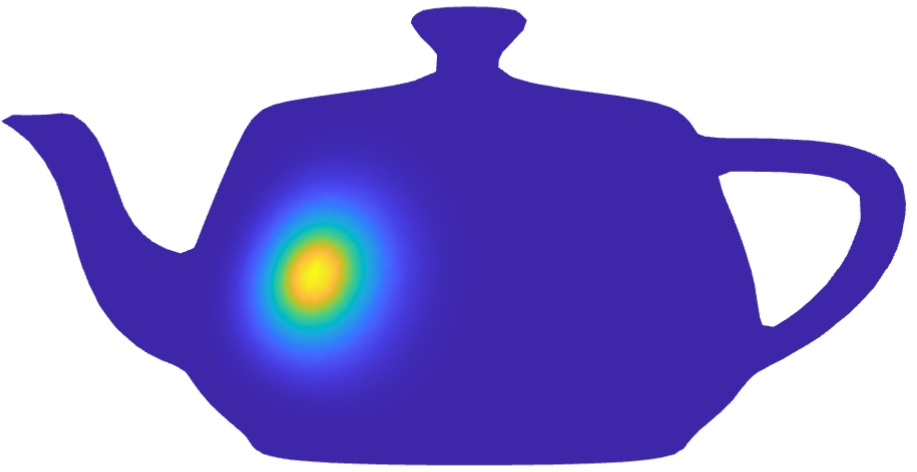}}
\subfigure[$t=0.25$\label{fig:teapot_2}]
{\includegraphics[width=0.19\textwidth]{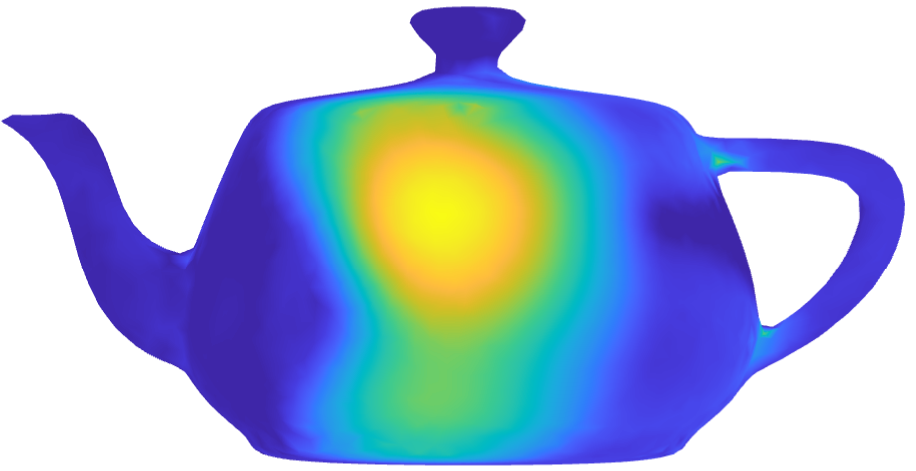}}
\subfigure[$t=0.5$\label{fig:teapot_3}]
{\includegraphics[width=0.19\textwidth]{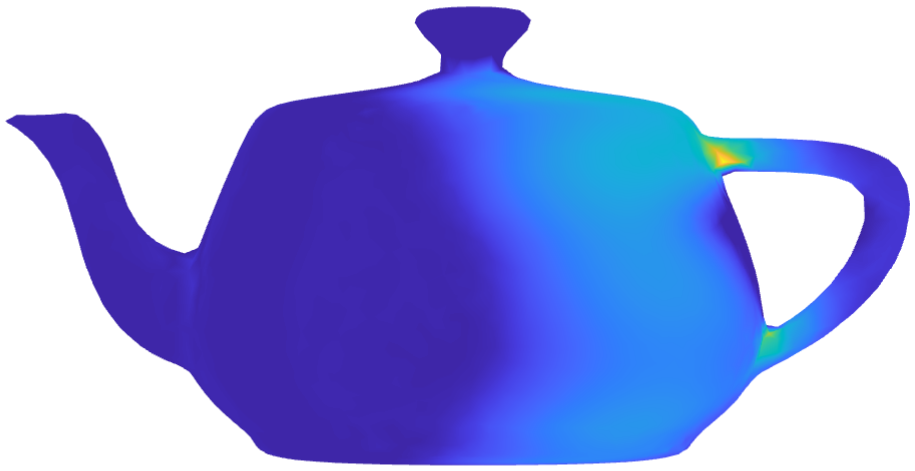}}
\subfigure[$t=0.75$\label{fig:teapot_4}]
{\includegraphics[width=0.19\textwidth]{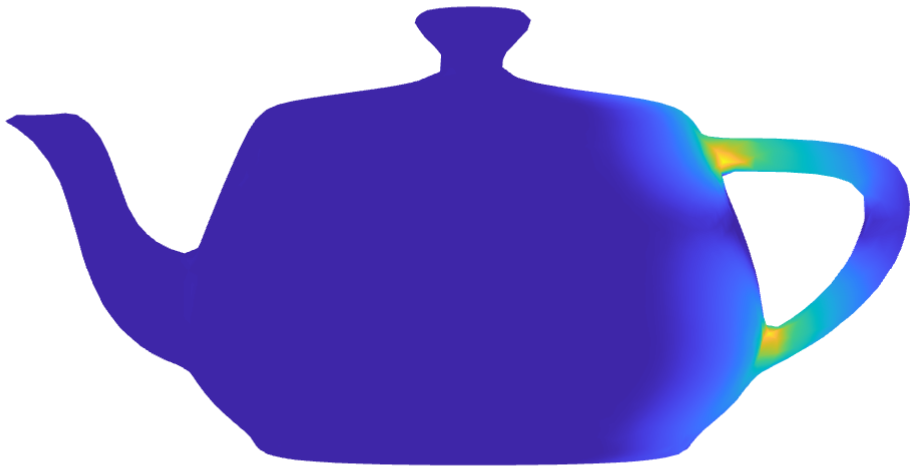}}
\subfigure[$t=1$\label{fig:teapot_5}]
{\includegraphics[width=0.19\textwidth]{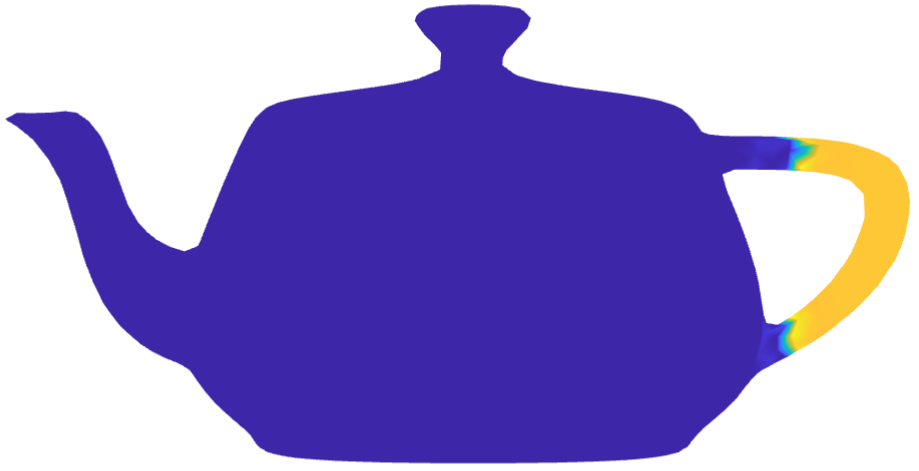}}
\caption{Evolution of mass distribution on a teapot.}\label{fig:teapot}
\end{figure}

\section{Conclusion} 
In this paper, using gradient recovery techniques we have proposed a gradient enhanced ADMM algorithm for numerically realizing the dynamic approach of optimal transportation on surfaces. The incorporation of gradient recovery techniques on surfaces not only  increased the approximation accuracy of numerical gradients, but also unified the presentation of primal and dual variables in the ADMM algorithm using the same grid of a single mesh. Based on that with a spectral decomposition method, we have a fast and robust algorithm for dynamic OT on surfaces. We presented several numerical examples to illustrate the proposed algorithm.


\section*{Acknowledgment}
The work of GD was partially supported by the National Natural Science Foundation of China (NSFC) grant No. 12001194, and the NSF grant of Hunan Province No. 2024JJ5413. The work of HG was partially supported by the Andrew Sisson Fund, Dyason Fellowship, and the Faculty Science Researcher Development Grant of the University of Melbourne. The work of ZS was supported by the NSFC grant No. 92370125.

\bibliographystyle{cas-model2-names}




\end{document}